\def \N{\mathbb{N}}
\newtheorem{theorem}{Theorem}[section]
\newtheorem{corollary}[theorem]{Corollary}
\begin{document}

\title[Reliability of $d$-dimensional consecutive systems]{A formula for the reliability of a $d$-dimensional consecutive-$k$-out-of-$n$:F system}
\author{Simon Cowell}
\address{Department of Mathematical Sciences, UAE University, Al Ain, Abu Dhabi, United Arab Emirates}
\email{scowell@uaeu.ac.ae}
\date{17th of March 2015}
\begin{abstract}
We derive a formula for the reliability of a $d$-dimensional consecutive-$k$-out-of-$n$:F system. That is, a formula for the probability that an $n_1 \times \ldots \times n_d$ array whose entries are (independently of each other) 0 with probability $p$ and 1 with probability $q=1-p$ does not include a contiguous $s_1 \times \ldots \times s_d$ subarray whose every entry is 1.
\end{abstract}
\keywords{consecutive-k-out-of-n:F system, reliability, probability, binary matrices, integer sequences}
\maketitle


\section{Introduction}

Consider a pipeline punctuated by several pumping stations. Suppose the pumping stations are redundant in that, if one of them fails, then its predecessor will be strong enough to pump the fluid past it to the next pumping station. Perhaps each pump will even be able to compensate for the failure of its successor and its successor's successor, but perhaps not for the failure of the first 3 pumping stations in the chain of its succesors. In this case, the system fails if and only if among the sequence of pumping stations there exists a consecutive run of 3 or more failed pumping stations. This is an example of what is known as a consecutive-$k$-out-$n$:F system.

Suppose that such a system having $n$ nodes fails if and only if $k$ consecutive nodes fail, and suppose that any given node in the system works correctly with probability $p \in [0,1]$, independently of the other nodes. Then the probability that any given node fails is $q = 1 - p$. The \emph{Reliability} of the system is the probability $R(k, n; q)$ that the system does not fail. Then $R(k, n; q) = 1-P(k, n; q)$, where $P(k, n; q)$ is the probability that the sequence of $n$ nodes includes a contiguous interval of $k$ or more failed nodes. Here we are using notation as in \cite{Daus_and_Beiu_paper_2}.

As noted in \cite{Daus_and_Beiu_paper_2}, the concept of the reliability of a consecutive system was introduced to Engineering by Kontoleon in 1980 \cite{Kontoleon_paper}. In the following year Chiang and Niu \cite{Chiang_and_Nu_paper} discussed some applications of consecutive systems.
The concept has been generalised in several directions, for instance to systems deemed to have failed if and only if they include: $k$ consecutive failed components \emph{or} $f$ failed components \cite{Tung_paper}; $k$ consecutive components of which at least $r$ have failed \cite{Griffith_article}; at least $m$ non-overlapping runs of $k$ consecutive failed components \cite{Papastavridis_paper}.
In 2001 Chang, Cui and Hwang published a book on the subject \cite{Chang_Cui_and_Hwang_book}.
Four reviews are also available \cite{Chao_Fu_and_Koutras_paper}, \cite{Chang_and_Hwang_article}, \cite{Eryilmaz_paper} and \cite{Triantafyllou_paper}.

An exact formula for $R(k,n;q)$ first appeared in \cite{De_Moivre_book}. De Moivre's approach depends on deriving the generating function (see \cite{Wilf_book}) for $P(k,n;q)$. This work has been rephrased in a modern style in \cite{Uspensky_book}.
Much later a different exact formula for $R(k,n;q)$ using Markov chains was found \cite{Fu_paper}, \cite{Chao_and_Fu_paper} and \cite{Fu_and_Hu_paper}.

Both of these exact formulae for $R(k,n;q)$ are difficult to use in Engineering, where values of $k$ and $n$ may be very large. Efficient algorithms have been found (\cite{Cluzeau_Keller_and_Schneeweiss_paper} and \cite{Hwang_and_Wright_paper}) but even with modern computing resources, evaluating the formulae takes a lot of time and memory. This problem has been addressed at length in the study of upper and lower bounds for $R(k,n;q)$. Such bounds have been found which are close approximations to the exact value of $R(k,n;q)$ and which are also much easier to evaluate than the exact value of $R(k,n;q)$. In \cite{Daus_and_Beiu_paper_1}, \cite{Beiu_and_Daus_Toronto_2014} and \cite{Daus_and_Beiu_paper_2} the authors compare many such results from the 1980s to the present day.

D{\u a}u{\c s} and Beiu were the first to use De Moivre's original formula to derive upper and lower bounds for $R(k,n;q)$. As shown in \cite{Daus_and_Beiu_paper_2} this method compares very favourably with the previous upper and lower bounds, especially for small $q$, and also with the exact algorithms given in \cite{Cluzeau_Keller_and_Schneeweiss_paper} and \cite{Hwang_and_Wright_paper} (using their bounds is about 100 times faster than the algorithm from \cite{Cluzeau_Keller_and_Schneeweiss_paper}, and 1000 times faster than the algorithm from \cite{Hwang_and_Wright_paper}).

One possible extension to higher dimensions is as follows: Given $d \in \N$ and $n_1, n_2, \ldots, n_d \in \N$, suppose that some system is represented by an $n_1 \times n_2 \times \cdots \times n_d$ array of 0s and 1s. Suppose that each node in the array is 0 with probability $p \in [0, 1]$ and is 1 with probability $q = 1 - p$, independently of the other nodes. Suppose that $s_1, s_2, \ldots, s_d \in \N$ are given, with $s_r \leq n_r$ for each $r$, and suppose that the system is deemed to have failed if and only if the $d$-dimensional array includes a contiguous $s_1 \times s_2 \times \cdots \times s_d$ subarray of 1s. Define the Reliability of the system as the probability $R(s_1, \ldots, s_d, n_1, \ldots, n_d; q)$ that the system does not fail, and let $P(s_1, \ldots, s_d, n_1, \ldots, n_d; q) = 1 - R(s_1, \ldots, s_d, n_1, \ldots, n_d; q)$, so that $P(s_1, \ldots, s_d, n_1, \ldots, n_d; q)$ is the probability that the array includes a contiguous $s_1 \times s_2 \times \cdots \times s_d$ subarray of 1s.

Let us survey in chronological order some recent papers dealing with the higher dimensional case. In \cite{Salvia_and_Lasher_paper} the 2-dimensional special case $R(s,s,n,n;q)$ is treated. Upper and lower bounds are found by reducing to the 1-dimensional case. Using empirical approximations to the exact value, $R(s,s,n,n;q)$ is plotted as a function of $q$ for $n=5$ and $s = 2, 3$ and $4$, for $n=10$ and $s = 2, 4, 6$ and $8$, and for $n=50$ and $s=2, 5, 10$ and 20. The authors do not attempt to find an explicit expression for $R(s,s,n,n;q)$.
In \cite{Boehme_Kossow_and_Preuss_paper} the 2-dimensional case is considered, for $R(s_1, s_2, n_1, n_2;q)$ with $(s_1, s_2) \in \{1, n_1\} \times \{1, n_2\}$. Exact formulae are derived by comparison with the 1-dimensional case. Similar results are also given for a cylindrical version of the 2-dimensional array.
The 2-dimensional case $R(s,s,n,n;q)$ of \cite{Salvia_and_Lasher_paper} is revisited in \cite{Koutras_Papadopoulos_and_Papastavridis_paper}. Again, upper and lower bounds are given, but in the more general case where the probabilities of distinct nodes of the system failing are not necessarily equal.
For the general 2-dimensional case $R(s_1, s_2, n_1, n_2; q)$, recursive formulae for the exact reliability have been given by Yamamoto and Miyakawa \cite{Yamamoto_and_Miyakawa_paper} and also by Noguchi, Sasaki, Yanagi and Yuge \cite{Noguchi_Sasaki_Yanagi_and_Yuge_paper}. The result presented herein is distinct from their work, in that we give a closed-form formula for the exact reliability as a polynomial in $q$, for a general system, of arbitrary dimension.
The 3-dimensional case is treated in \cite{Gharib_El-Sayed_and_Nashwan_paper}, where exact formulae for the reliability are given in special cases analogous to those studied in \cite{Boehme_Kossow_and_Preuss_paper}.

In the present article we derive general closed-form exact formulae for $R(s_1, \ldots, s_d, n_1, \ldots, n_d;q)$ and for $P(s_1, \ldots, s_d, n_1, \ldots, n_d;q)$, where the dimension $d \in \N$ is arbitrary. As far as the author knows, no such formulae were known before for $d > 1$, even in the case $d=2$. Indeed, in \cite{Koutras_Papadopoulos_and_Papastavridis_paper} the authors write ``It is very difficult (probably impossible) to derive simple explicit formulas for the reliability of a general 2D-consecutive-$k$-out-$n$:F system."

\section{Results}

\begin{theorem}[Main result]
\label{thm: main result}
Let the terms
\begin{align*}
& d, \\
& s_1, \ldots, s_d, \\
& n_1, \ldots n_d, \\
& p, q, \\
& R(s_1, \ldots, s_d, n_1, \ldots, n_d; q) \\
& P(s_1, \ldots, s_d, n_1, \ldots, n_d; q)
\end{align*}
be as in the introduction, and let
\[
E = \prod_{r=1}^d \{1, \ldots, n_r - s_r + 1 \}.
\]
Then
\begin{multline}
\label{eq: formula for R}
R(s_1, \ldots, s_d, n_1, \ldots, n_d; q) = \\
1 + \sum_{J \in \mathcal{P}(E) \setminus \emptyset}
(-1)^{|J|}
\prod_{J' \in \mathcal{P}(J) \setminus \emptyset}
\exp
\left [
(-1)^{|J'|}
\ln
\left (
\frac{1}{q}
\right )
\prod_{r=1}^d
\max
\left (
0, s_r -
\left (
\max_{e \in J'} e_r - \min_{e \in J'} e_r
\right )
\right )
\right ]
\end{multline}
and
\begin{multline}
\label{eq: formula for P}
P(s_1, \ldots, s_d, n_1, \ldots, n_d; q) = \\
- \sum_{J \in \mathcal{P}(E) \setminus \emptyset}
(-1)^{|J|}
\prod_{J' \in \mathcal{P}(J) \setminus \emptyset}
\exp
\left [
(-1)^{|J'|}
\ln
\left (
\frac{1}{q}
\right )
\prod_{r=1}^d
\max
\left (
0, s_r -
\left (
\max_{e \in J'} e_r - \min_{e \in J'} e_r
\right )
\right )
\right ]
\end{multline}
where $\mathcal{P}(A)$ denotes the power set of the set $A$, that is, the set of subsets of $A$, and $\emptyset$ denotes the empty set.
\end{theorem}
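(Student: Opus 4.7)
\bigskip

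\textbf{Proof plan.}

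For each $e = (e_1,\ldots,e_d) \in E$, let $B_e = \prod_{r=1}^d \{e_r, e_r+1, \ldots, e_r + s_r - 1\}$ denote the block of cells occupied by the $s_1 \times \cdots \times s_d$ subarray with ``least'' corner at $e$, and let $A_e$ be the event that every cell of $B_e$ equals $1$. The system fails iff $\bigcup_{e \in E} A_e$ occurs, so I would begin from
\[
P(s_1,\ldots,s_d,n_1,\ldots,n_d;q) = \Pr\!\left(\bigcup_{e \in E} A_e\right)
\]
and apply the standard inclusion-exclusion principle, obtaining
\[
P = \sum_{J \in \mathcal{P}(E) \setminus \emptyset} (-1)^{|J|+1} \Pr\!\left(\bigcap_{e \in J} A_e\right), \qquad R = 1 - P.
\]
This already produces the outer alternating sum indexed by $J$.

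Next, I would evaluate $\Pr(\bigcap_{e \in J} A_e)$. Since the cells are independent and each is $1$ with probability $q$, the intersection event says ``every cell in $\bigcup_{e \in J} B_e$ equals $1$,'' so
\[
\Pr\!\left(\bigcap_{e \in J} A_e\right) = q^{N(J)}, \qquad N(J) := \Bigl| \bigcup_{e \in J} B_e \Bigr|.
\]
To get $N(J)$, I would apply inclusion-exclusion a second time, now to the union of the blocks $B_e$ themselves:
\[
N(J) = \sum_{J' \in \mathcal{P}(J) \setminus \emptyset} (-1)^{|J'|+1} \Bigl| \bigcap_{e \in J'} B_e \Bigr|.
\]
The key geometric observation is that since each $B_e$ is a product of intervals, its intersection is again a product of intervals, one per coordinate. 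In coordinate $r$ the intersection is $\{\max_{e \in J'} e_r, \ldots, (\min_{e \in J'} e_r) + s_r - 1\}$ when non-empty, whose length is exactly $\max\!\bigl(0,\; s_r - (\max_{e \in J'} e_r - \min_{e \in J'} e_r)\bigr)$. Multiplying across $r$ yields precisely the product appearing inside the theorem's formula.

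Substituting this expression for $|\bigcap_{e \in J'} B_e|$ into the second inclusion-exclusion produces a closed form for $N(J)$, and then the identity
\[
q^{N(J)} = \exp\!\bigl(-N(J)\ln(1/q)\bigr) = \prod_{J' \in \mathcal{P}(J) \setminus \emptyset} \exp\!\Bigl[(-1)^{|J'|}\ln(1/q)\cdot \bigl|\!\textstyle\bigcap_{e\in J'} B_e\bigr|\Bigr]
\]
converts the sum in the exponent into the product over $J'$ that appears in \eqref{eq: formula for R} and \eqref{eq: formula for P}. Plugging this back into the first inclusion-exclusion and combining with $R = 1 - P$ yields both formulae.

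\textbf{Expected obstacle.} The probabilistic and combinatorial content is essentially two nested applications of inclusion-exclusion glued together by an elementary volume computation, so no step is deep; the main thing to handle carefully is the passage from $q^{N(J)}$ to the product-of-exponentials form, together with the degenerate regime $q \in \{0,1\}$ where $\ln(1/q)$ is not defined. Since both sides of \eqref{eq: formula for R} are polynomials in $q$ (each factor $q^{N(J)}$ is), I would establish the identity for $q \in (0,1)$ where the logarithm is well-defined and extend to the closed interval $[0,1]$ by continuity, stating this caveat explicitly.
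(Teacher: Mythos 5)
Your proposal is correct and follows essentially the same route as the paper: two nested applications of inclusion--exclusion (first over the events indexed by $E$, then over the cells covered by the blocks), with the intersection of blocks computed coordinatewise via the $\max(0, s_r - (\max_{e\in J'} e_r - \min_{e\in J'} e_r))$ formula. The paper dresses the first step in the language of a directed graph of systems and their descendents, and derives $\Pr(\text{descendents of }a)=q^{k(a)}$ by a binomial expansion rather than directly from independence, but these are cosmetic differences; your explicit handling of the degenerate cases $q\in\{0,1\}$ is a small point the paper leaves implicit.
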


Theorem \ref{thm: main result} implies the following combinatorial result:
\begin{corollary}
\label{cor: combinatorial formula}
With all terms as in Theorem \ref{thm: main result}, the number $a(s_1, \ldots, s_d, n_1, \ldots, n_d)$ of failed systems among the $2^{\prod_{r=1}^d n_r}$ possible systems is
\begin{multline*}
a(s_1, \ldots, s_d, n_1, \ldots, n_d) = \\
-2^{\prod_{r=1}^d n_r}
\sum_{J \in \mathcal{P}(E) \setminus \emptyset}
(-1)^{|J|}
\prod_{J' \in \mathcal{P}(J) \setminus \emptyset}
\exp
\left [
(-1)^{|J'|}
\ln
\left (
2
\right )
\prod_{r=1}^d
\max
\left (
0, s_r -
\left (
\max_{e \in J'} e_r - \min_{e \in J'} e_r
\right )
\right )
\right ].
\end{multline*}
\end{corollary}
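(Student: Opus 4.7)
The plan is simply to derive the corollary from Theorem \ref{thm: main result} by specializing the parameter $q$. Set $q = 1/2$. Then each node of the array is independently $0$ or $1$ with equal probability $1/2$, so every one of the $2^{\prod_{r=1}^d n_r}$ possible $\{0,1\}$-valued arrays of size $n_1 \times \cdots \times n_d$ arises with the same probability $2^{-\prod_{r=1}^d n_r}$. Consequently, if $a(s_1,\ldots,s_d,n_1,\ldots,n_d)$ denotes the number of such arrays that contain a contiguous $s_1 \times \cdots \times s_d$ subarray of $1$s, then
\[
P(s_1, \ldots, s_d, n_1, \ldots, n_d; \tfrac{1}{2}) \;=\; \frac{a(s_1, \ldots, s_d, n_1, \ldots, n_d)}{2^{\prod_{r=1}^d n_r}}.
\]

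Solving for $a$ gives $a = 2^{\prod_{r=1}^d n_r} \cdot P(s_1, \ldots, s_d, n_1, \ldots, n_d; 1/2)$. Substituting $q = 1/2$ into (\ref{eq: formula for P}) replaces $\ln(1/q)$ by $\ln 2$, and the right-hand side then matches the claimed expression term for term, modulo the overall factor $2^{\prod_{r=1}^d n_r}$ that I have just pulled out.

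The only thing that could conceivably need comment is whether the substitution $q = 1/2$ is admissible. Every occurrence of $q$ in (\ref{eq: formula for P}) enters through the single quantity $\ln(1/q)$, which is analytic on $(0,1]$, so $q = 1/2$ is a perfectly regular point of the formula and no obstacle arises. There is therefore no ``hard part'' — the corollary is an immediate specialization of the main theorem, and the proof consists essentially of the two displayed lines above together with the substitution.
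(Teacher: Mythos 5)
Your proof is correct and is essentially the same as the paper's: both set $q=\tfrac{1}{2}$ so that all systems are equally likely, turning the probability $P(s_1,\ldots,s_d,n_1,\ldots,n_d;\tfrac{1}{2})$ into the count of failed systems divided by $2^{\prod_{r=1}^d n_r}$, and then read off the formula from \eqref{eq: formula for P} with $\ln(1/q)=\ln 2$. Your remark on the admissibility of the substitution is a harmless extra detail.
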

\begin{proof}
In case $p = q = \frac{1}{2}$, all possible systems occur with equal probability, so the probability measure on the power set of the set of all systems becomes the counting measure, divided by a factor of $2^{\prod_{r=1}^d n_r}$.
\end{proof}

Setting $d=1$, $n_1 = n$ and $s_1=2, 3$ and $4$ in Corollary \ref{cor: combinatorial formula} yields the sequences \cite{Sloane_Kennedy_Weisstein_sequence}, \cite{Weisstein_sequence_1} and \cite{Weisstein_sequence_2}, respectively.
Setting $d=2$, $n_1 = n_2 = n$ and $s_1 = s_2 = 2$ in Corollary \ref{cor: combinatorial formula} yields the sequence \cite{Cowell_sequence}, whose complementary sequence is \cite{Hardin_sequence}.

\begin{proof}[Proof of Theorem \ref{thm: main result}]
Call a system $e$ an ``elementary failure case" if and only if it includes a contiguous $s_1 \times \ldots \times s_d$ subarray of 1s, and its other elements are all 0.
We identify the set of all elementary failure cases $e$ with the set $E$, according to the rule
\[
e \leftrightarrow (e_1, \ldots, e_d)
\]
if and only if the $(i_1, \ldots, i_d)$-th element of $e$ is 1 precisely when, for all $r \in \{1, \ldots, d\}$,
\[
e_r \leq i_r \leq e_r + s_r - 1.
\]
For example, in the two-dimensional case, we identify an elementary failure case with the indices $(e_1,e_2)$ of the top left corner of its contiguous subrectangle of 1s.
Denote by $G$ the simple acyclic directed graph whose vertices are all the possible systems, and in which system $b$ is a direct successor of system $a$ if and only if $b$ is obtained from $a$ by changing a single 0 element to 1.
Then the failed systems $f$ in $G$ are precisely those vertices having some elementary failure case $e \in E$ as an ancestor (including the possibility that $f = e$.)
Therefore if $F$ denotes the set of all failed systems $f$ (in terms of probability, $F$ is the event that the system fails,) then
\[
F = \bigcup_{e \in E} \text{the set of descendents of $e$},
\]
where by ``$b$ is a descendent of $a$" we mean to include the possibility that $b = a$.
Then
\begin{align*}
P(s_1, \ldots, s_d, n_1, \ldots, n_d; q)
&= P(F) \\
&= P 
\left (
\bigcup_{e \in E}
\text{the set of descendents of $e$}
\right ) \\
&= \sum_{J \in \mathcal{P}(E) \setminus \emptyset}
(-1)^{|J| + 1}
P 
\left (
\bigcap_{e \in J}
\text{the set of descendents of $e$}
\right ),
\end{align*}
by the Inclusion-Exclusion Principle for the probability measure on the power set of the set of all possible systems.
But
\[
\bigcap_{e \in J} \text{the set of descendents of $e$} = \text{the set of descendents of $\mathrm{hcd}(J)$},
\]
where by $\mathrm{hcd}(K)$ we mean the highest common descendent of any given set $K$ of vertices in $G$. That is, in the partial order on the vertices of $G$ defined by $a \lesssim b$ precisely when $a$ is an ancestor of $b$, $\mathrm{hcd}(K)$ is the least upper bound of $K$.
Moreover,
\[
\mathrm{hcd}(J) = \bigcup_{e \in J} e,
\]
where by the union of several $n_1 \times \cdots \times n_d$ systems we mean the system whose generic element is 1 if and only if the corresponding element in at least one of those systems is also 1.
Thus
\begin{align}
\label{eq: star}
P(s_1, \ldots, s_d, n_1, \ldots, n_d; q)
&=
\sum_{J \in \mathcal{P}(E) \setminus \emptyset}
(-1)^{|J|+1}
P
\left (
\bigcap_{e \in J}
\text{the set of descendents of $e$}
\right ) \\ \nonumber
&=
\sum_{J \in \mathcal{P}(E) \setminus \emptyset}
(-1)^{|J|+1}
P
\left (
\text{the set of descendents of $\bigcup_{e \in J} e$}
\right ).
\end{align}
However, for any vertex $a$ of $G$,
\begin{align*}
P(\text{the set of descendents of $a$})
&=
\sum_{\text{$b$ is a descendent of $a$}} P(\{b\}) \\
&=
\sum_{\text{$b$ is a descendent of $a$}} p^{n_1 n_2 \ldots n_d - k} q^k,
\end{align*}
where $k = k(b)$ is the number of failed nodes in the system $b$.
Gathering like powers of $q$ in the above sum, we have
\begin{align*}
P(\text{the set of descendents of $a$})
&=
\sum_{k=k(a)}^{n_1 n_2 \ldots n_d}
{n_1 n_2 \ldots n_d - k(a) \choose k-k(a)} p^{n_1 n_2 \ldots n_d - k} q^k \\
&= q^{k(a)}
\sum_{k=k(a)}^{n_1 n_2 \ldots n_d} {n_1 n_2 \ldots n_d - k(a) \choose k-k(a)} p^{n_1 n_2 \ldots n_d - k} q^{k-k(a)} \\
&= q^{k(a)}
\sum_{j=0}^{n_1 n_2 \ldots n_d - k(a)} {n_1 n_2 \ldots n_d - k(a) \choose j} p^{n_1 n_2 \ldots n_d - k(a) - j} q^j \\
&=
q^{k(a)} (p+q)^{n_1 n_2 \ldots n_d - k(a)} \\
&= q^{k(a)}.
\end{align*}
That is, for any vertex $a$ of $G$,
\[
P(\text{the set of descendents of $a$}) = q^{k(a)},
\]
where $k(a)$ denotes the number of failed nodes in the system $a$.
Therefore from \eqref{eq: star} we obtain
\begin{align}
\label{eq: star star}
P(s_1, \ldots s_d, n_1, \ldots n_d; q)
&=
\sum_{J \in \mathcal{P}(E) \setminus \emptyset}
(-1)^{|J|+1}
P
\left (
\text{the set of descendents of $\bigcup_{e \in J} e$}
\right ) \\ \nonumber
&=
\sum_{J \in \mathcal{P}(E) \setminus \emptyset}
(-1)^{|J|+1}
q^{k
\left (
\bigcup_{e \in J} e
\right )
}
.
\end{align}
The next task is to compute the number $k \left ( \bigcup_{e \in J} e \right )$ of 1s in the system $\bigcup_{e \in J} e$, where $J$ is some nonempty subset of the set $E$ of elementary failure cases $e$.
We use the Inclusion-Exclusion Principle again, this time for the counting measure on the power set of the set of elements of a system, to obtain
\begin{equation}
\label{eq: star star star}
k
\left (
\bigcup_{e \in J} e
\right )
=
\sum_{J' \in \mathcal{P}(J) \setminus \emptyset}
(-1)^{|J'|+1}
k
\left (
\bigcap_{e \in J'} e
\right ).
\end{equation}
Each elementary failure case consists entirely of 0s except for a contiguous $s_1 \times \ldots \times s_d$ subarray of 1s.
Therefore $\bigcap_{e \in J'} e$ also consists entirely of 0s except for a (possibly empty) contiguous subarray of dimensions $t_1 \times \cdots \times t_d$, where for each $r \in \{1, \ldots d\}$,
\begin{align*}
t_r
&= \max
\left (
0,
\left (
\min_{e \in J'}
\left (
e_r + s_r - 1
\right )
- \max_{e \in J'} e_r
+ 1
\right )
\right ) \\
&=
\max
\left (
0, s_r -
\left (
\max_{e \in J'} e_r - \min_{e \in J'} e_r
\right )
\right ).
\end{align*}
The volume of that contiguous $t_1 \times \cdots \times t_d$ subarray is equal to $k \left ( \bigcap_{e \in J'} e \right )$, that is,
\begin{align}
\label{eq: star star star star star}
k
\left (
\bigcap_{e \in J'} e
\right )
&=
\prod_{r=1}^d t_r \\ \nonumber
&=
\prod_{r=1}^d
\max
\left (
0, s_r -
\left (
\max_{e \in J'} e_r - \min_{e \in J'} e_r
\right )
\right ) .
\end{align}
Considering \eqref{eq: star star}, \eqref{eq: star star star} and \eqref{eq: star star star star star}, we have
\begin{multline*}
P(s_1, \ldots, s_d, n_1, \ldots, n_d; q)
=
\sum_{J \in \mathcal{P}(E) \setminus \emptyset}
(-1)^{|J|+1}
q^{k
\left (
\bigcup_{e \in J} e
\right )
} \\
=
\sum_{J \in \mathcal{P}(E) \setminus \emptyset}
(-1)^{|J|+1}
\exp
\left [
\ln (q)
\sum_{J' \in \mathcal{P}(J) \setminus \emptyset}
(-1)^{|J'|+1}
k
\left (
\bigcap_{e \in J'} e
\right )
\right ] \\
=
\sum_{J \in \mathcal{P}(E) \setminus \emptyset}
(-1)^{|J|+1}
\exp
\left [
\ln (q)
\sum_{J' \in \mathcal{P}(J) \setminus \emptyset}
(-1)^{|J'|+1}
\prod_{r=1}^d
\max
\left (
0, s_r -
\left (
\max_{e \in J'} e_r - \min_{e \in J'} e_r
\right )
\right )
\right ] \\
=
-\sum_{J \in \mathcal{P}(E) \setminus \emptyset}
(-1)^{|J|}
\prod_{J' \in \mathcal{P}(J) \setminus \emptyset}
\exp
\left [
(-1)^{|J'|}
\ln
\left (
\frac{1}{q}
\right )
\prod_{r=1}^d
\max
\left (
0, s_r -
\left (
\max_{e \in J'} e_r - \min_{e \in J'} e_r
\right )
\right )
\right ],
\end{multline*}
which proves equation \eqref{eq: formula for P}. Equation \eqref{eq: formula for R} follows immediately.
\end{proof}


As an example, we compute the reliabilities of 3 consecutive systems, having dimensions 1, 2 and 3:

\begin{align*}
R(1,2;q)	&= 1 - 2 q + q^2, \\
R(1,2,2,3;q)	&= 1-4 q^2+2 q^3+4 q^4-4 q^5+q^6, \\
R(1,2,3,2,3,4;q)	&= 1-8 q^6+4 q^8+4 q^9+4 q^{10}-8 q^{11}+18 q^{12}-16 q^{14} -16 q^{15} \\
& \qquad -12 q^{16}+40 q^{17}+4 q^{18}-8 q^{19}-8 q^{20}-12 q^{21}+20 q^{22}-8 q^{23}+q^{24}.
\end{align*}

See also Figure 1.

\vspace{1cm}

\newpage

\begin{center}
Figure 1 \\

\nopagebreak

\vspace{5mm}
\includegraphics[width=.9\columnwidth]{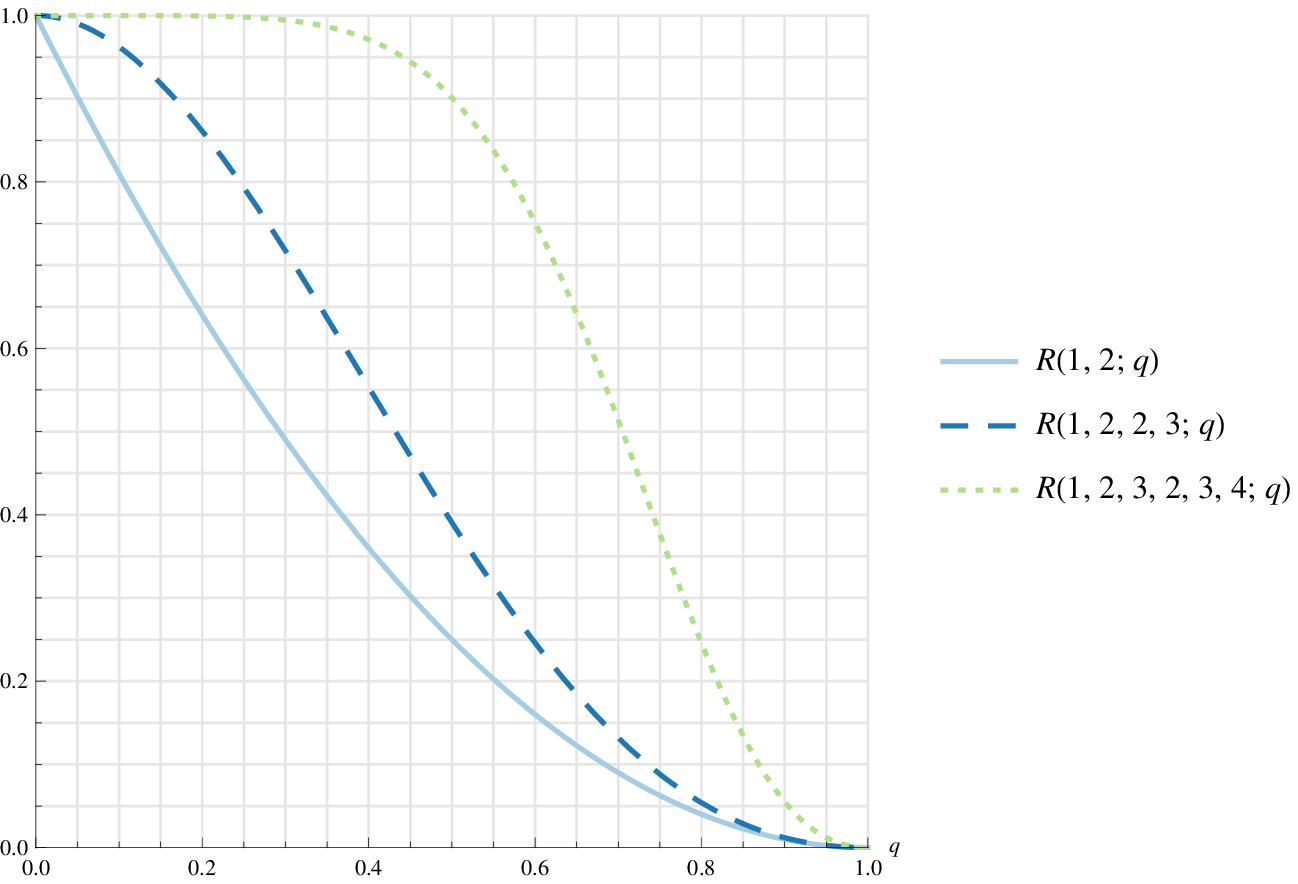}
\end{center}



\section{Conclusion}

We have provided a novel formula for the reliability of a general $d$-dimensional consecutive-$k$-out-of-$n$:F system, as an exact polynomial in $q$. We believe ours to be the first general exact closed-form formula published for the reliability in $d$ dimensions. This answers an open question in Engineering.

\section{Conflict of Interests}

The author declares that there is no conflict of interests regarding the publication of this manuscript.

\end{document}